\theoremstyle{plain}
\newtheorem{theorem}{Theorem}[section]
\newtheorem{pro}[theorem]{Proposition}
\newtheorem{lemma}[theorem]{Lemma}
\theoremstyle{definition}
\newtheorem*{defi}{Definition}
\newtheorem{Ex}[theorem]{Example}
\theoremstyle{remark}
\def \Q {{\mathbb Q}}
\def \W {{\mathbb W}}
\def \Z {{\mathbb Z}}
\def \0 {{\mathbf 0}}
 \title{Bhargava factorials and irreducibility of
integer-valued polynomials}
\author{Devendra Prasad\\
 Department of Mathematics\\ 
IISER-Tirupati\\ Tirupati, Andhra Pradesh\\
 India, 517507\\
 devendraprasad@iisertirupati.ac.in}
\begin{document}
        \date{}
     \maketitle

     \begin{abstract}
     
    The ring of integer-valued polynomials  over a given subset $S$ of   $\Z$ (or $ \mathrm{Int}(S,\Z ))$ is defined as the set of polynomials in $\Q[x]$ which maps $S$  to $\Z$. In factorization theory, it is  crucial to check the irreducibility of a polynomial. In this article, we make Bhargava factorials   our main tool  to check the irreducibility of a given polynomial 
 $f \in   \mathrm{Int}(S,\Z ))$. We also generalize our results to arbitrary subsets of a Dedekind domain.

     \end{abstract}

 \section{Introduction}
     
     In his celebrated work, Bhargava \cite{Bhar1} (see \cite{Bhar3} also) generalized the notion of factorials to an arbitrary subset of $\Z$ (or a Dedekind domain).  These factorials are intrinsic to the given subset. He answered several open questions and generalized some mathematical identities by using these factorials. He obtained these factorials by the notion of $p$-orderings. For the sake of completeness, we recall all these concepts. 
     
     \medskip
     
    Let $S$ be an arbitrary subset of  $\mathbb{Z}$ and  $p$ be  a fixed prime. A $p$-ordering of $S$ is a sequence  $a_0, a_1, a_2,\cdots $ of elements of $S$ that is formed as follows:
    
     \begin{itemize}

  \item Choose any element $a_0 \in S;$
 \item  Choose an element $a_1 \in S$ that minimizes the highest power of $p$ dividing $a_1 -a_0;$
 \item  Choose an element $a_2 \in S$ that minimizes the highest power of $p$ dividing $(a_2 -a_0)(a_2-a_1);$ 

In a similar way,
 
\item  

Choose an element $a_k \in S$ that minimizes the highest power of $p$ dividing $\prod_{i=0}^{k-1} (a_k-a_i).$
  
         \end{itemize}
         

  %
 
  \medskip
  
Corresponding to a given $p$-ordering, we also get a sequence of  powers of $p$,  namely, the powers of $p$ minimized at each step, which is termed as {\em $p$-sequence}. Bhargava proved that this sequence does not depend on any choice of $p$-ordering.
If the sequence $\{ a_i \}_{ i \geq 0}$  is a $p$-ordering of $S\ \forall\ p \in \Z,$ then we say that $\{ a_i \}_{ i \geq 0}$  is a {\em simultaneous $p$-ordering.}  If a  simultaneous $p$-ordering contains $k$ terms, then we say that  the sequence forms a {\em simultaneous $p$-ordering  of length $k$}. 

\medskip

  For a given integer $d$ and a given prime $p$, we denote the highest power of $p$ dividing $d$ by $w_p(d).$ For example, $w_2(12)= 2^2$. With this notation, the generalized factorial of index $k\ \forall\ k \geq 0$ is defined as
  
  $$k!_S= \prod_p w_{p}((a_k-a_0)(a_k-a_1) \ldots (a_k-a_{k-1})).$$

\medskip

The number of primes in the above definition is always finite. 
Recall that, the ring of integer-valued polynomials over a subset $S  \subseteq \Z $ is defined as

$$ \mathrm{Int}(S,\Z  )=    \{f \in \Q[x]: f(S) \subseteq \Z \} . $$

Denote the set of polynomials of  $\mathrm{Int}(S,\Z  )$ of degree $k$ by $\mathrm{Int}_k (S,\Z  ).$
It turns out that
\begin{equation}\label{eq:factorial}
  k!_S = \gcd \{a : a \mathrm{Int}_k (S,\Z  ) \subseteq \Z[x]  \}. 
  \end{equation}
\medskip

The ring of integer-valued polynomials can be defined for any  subset of a   domain. This ring has emerged as a broad area of research in the previous few decades. In ring theory, irreducibility is one of the most exciting and well-studied concepts with a venerable history. However, for the ring $\mathrm{Int}(S,\Z  )$, irreducibility is not studied that much.  
For some criteria on  irreducibility, we refer to Prasad, Rajkumar and Reddy \cite{prasadsurvey}, where readers can find a whole section on the irreducibility of integer-valued polynomials.  

\medskip

This article is an attempt to understand the irreducibility of a polynomial  $f \in \mathrm{Int}(S,\Z  )$ by the notion of generalized factorials (or Bhargava factorials). In this article, we get a new criterion to test the reducibility of  polynomials in $\mathrm{Int}(S,\Z  )$  and then we generalize this result to the case of an arbitrary subset of a Dedekind domain.

\medskip

 The summary of the paper is as follows. In Section \ref{dkord}, we define the notion of $d_k$-orderings with some examples.    Section \ref{sec irr}  presents our main result on the irreducibility of polynomials in $\mathrm{Int}(S,\Z  )$ . We explicitly give some examples to explain our theorem and show how our results can be more viable in the study of integer-valued polynomials. 

\section{$d_k$-orderings}\label{dkord}     We start this section with the definition of $d_k$-orderings which is a cornerstone of our work.
  \begin{defi} For    given integers $d $ and $k$,     let $p_1, p_2,\ldots, p_r$
 be all the  prime    divisors of $d$. For   $1 \leq j \leq r$, let  $\{ u_{ij} \}_{i \geq 0} $ be a $p_j$-ordering of $S \subseteq \Z.$ 
 Then a $d_k$-ordering $ \{ x_i \}_{0 \leq  i \leq k}$   of $S$     is a solution to the following congruences
 
   
   \begin{equation}\label{crt}
   x_i \equiv u_{ij} \pmod{ p_j^{e_{kj}+1}}\ \forall\ 1 \leq j \leq r,
   \end{equation}

  where $p_j^{e_{kj} }=w_{p_j}(k!_S)$. 
\end{defi}

If a $d_k$-ordering of a subset $S \subseteq \Z$ belongs to $S$, then it is a $p$-ordering 
 for all primes $p$ dividing $d$. However,    a $d_k$-ordering   gives  the    same $p$-sequences for all $p$ dividing $d$, even if  $d_k$-ordering doesn't belong to $S$; this holds because by construction  $\prod_{
k=0,...,k−1}(u_{kj}-
u_{ij} )$ and  $\prod_{
k=0,...,k−1}
(x_k - x_i)$ have the same $p$-adic valuations for all $p$ dividing $d$.  
In the case when $d=0,$ a $ d_k$-ordering is clearly a simultaneous $p$-ordering of length $k+1.$  Before proceeding, we give some examples of $d_k$-orderings. 
Here (and throughout the article) $\W$ denotes the set $ \{0,1,2,\ldots \}.$

 \begin{Ex}  Let $S =\{ 0^2, 1^2, 2^2, \ldots  \}$. 
 Then,  $0^2, 1^2, 2^2, \ldots,  k^2 $ 
 is a  $d_k$-ordering for every $(d,k) \in \W^2$.
 
\end{Ex} 
 \begin{Ex}  Let $S =a \Z+b$  where $a,b \in \Z.$ Then the sequence $b,a+b,\ldots, ak+b$ is  a $d_k$-ordering  for every  $(d,k) \in \W^2$. 
  Actually, any $k +1$ consecutive term  of the set $S$ forms a  $d_k$-ordering for every $(d,k) \in \W^2$.
 
\end{Ex}
   
  For a given subset $S \subseteq \Z,$ the {\em fixed divisor } of a polynomial $f \in \Z[x]$ over $S$ is denoted by $d(S,f)$ and is defined as follows 
  
  $$d(S,f)=  \gcd \{ f(a): a \in S    \} .$$
  
 We refer to  Prasad, Rajkumar and Reddy \cite{prasadsurvey} for a brief survey on the topic. The readers may find some exciting results and  applications of fixed divisors in  \cite{Peruginelli},   \cite{Devendrafixed},  \cite{Prasad2019},  \cite{Devendra} etc.

  \medskip

  Let  $\{ a_i \}_{i \geq 0}$ be a sequence of distinct elements  of $S.$ Suppose that  for every $k >0,\ \exists\ l_k \in \Z,$  such that for every polynomial $f$ of degree $k$  $$d(S,f)=(f( a_0),f(a_1), \ldots, f(a_{l_k})),$$
   and no proper subset of $\{ a_0, a_1, \ldots,  a_{l_k} \}$ 
  determines the fixed divisor of all the degree $k$ polynomials. Then we say that the sequence $\{ a_i \}_{i \geq 0}$ is   a {\em  fixed divisor sequence} (see Prasad, Rajkumar and Reddy \cite{prasadsurvey}).
   For instance, the sequence $0,1,2, \ldots$ is a fixed divisor sequence  in $\Z$ with $l_k=k\ \forall\ k >0.$

  \begin{Ex} Let $S $ be a subset of $\Z$ with a fixed divisor sequence $\{ a_i \}_{i \geq 0}$. Assume $ l_k=k\ \forall\ k \geq 0,$ then $a_0,a_1, \ldots, a_k$ is a $d_k$-ordering of length $k$ for every $(d,k) \in \W^2 $.

  \end{Ex}

\medskip

Let  $(d,p,i) \in \W^3$ be a given triplet with $w_p(i!_S) \leq  w_p(d)$ and 
  $ S \subseteq \Z$ be a given subset. 
Assume  $\mu_i (d , p)$ denote the    power of  the prime $p$ such that 
$$ \mu_i (d , p)  w_p(i!_S)=w_p(d).$$

The sequence $(\mu_i (d , p))_{i \geq 0}$ is a decreasing sequence in the powers of $p$. 
 The function $ \mu_i (d , p)$ also depends on the set chosen (since  generalized factorials depend).  For instance, if $(d,k)=(6,4)$ and $S=\Z$ then
$$\mu_0(6 , 3)=3^1\ \mathrm{and}\ \mu_1(6 , 3)=3^1. $$

However, if $S=3\Z$, then
$$\mu_0(6 , 3)=3^1\ \mathrm{and}\ \mu_1(6 , 3)=3^0. $$
Hence, we always assume that, in the notation $\mu_i (d , p)$, 
  the subset automatically comes   from the context. 

\medskip

   In some special cases (for instance, when $S=\Z$), we can get a precise formula for the sequence $\{ \mu_i (d , p) \}_{ i \geq 0}$ very easily.
\medskip

The following Lemma connects generalized factorials and $d_k$-orderings.

\begin{lemma}\label{conlem}   Let $a_0, a_1, \ldots, a_k$ be a $d_k$-ordering of $S \subseteq \Z$ for  
$(d, k) \in \W^2$.  Then for any prime $p$ dividing $d$

$$w_p(d(S,F_r))= w_p(r!_S)\ \forall\ 0 \leq  r\ \leq k, $$
where $F_r=(x-a_0)(x-a_1) \ldots (x-a_{r-1}).$
\end{lemma}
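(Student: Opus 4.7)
The plan is to fix a prime $p = p_j$ dividing $d$ and prove separately that $v_p(d(S, F_r)) = v_p(r!_S)$, which by definition of the Bhargava factorial equals the $p$-sequence value $e_{rj}$. Since $d(S, F_r) = \gcd_{a \in S} F_r(a)$, this reduces to showing $\min_{a \in S} v_p(F_r(a)) = e_{rj}$.

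The key bridge will be the telescoping identity showing that $x_i \equiv y_i \pmod{N}$ for all $i$ implies $\prod_i x_i \equiv \prod_i y_i \pmod{N}$. Applied to the defining congruences $a_i \equiv u_{ij} \pmod{p^{e_{kj}+1}}$ of the $d_k$-ordering, this yields, for every integer $a$,
$$F_r(a) \equiv \prod_{i=0}^{r-1}(a - u_{ij}) \pmod{p^{e_{kj}+1}}.$$

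From here I would argue in two halves. For the lower bound $v_p(F_r(a)) \geq e_{rj}$ for every $a \in S$, I would invoke the defining property of the $p$-ordering $\{u_{ij}\}$, namely that $\prod_{i=0}^{r-1}(a - u_{ij})$ has $p$-adic valuation at least $e_{rj}$ for each $a \in S$, and then use the congruence together with the inequality $e_{rj} \leq e_{kj} < e_{kj}+1$ to transfer the bound to $F_r(a)$. For the matching upper bound, I would evaluate at $a = u_{rj}$, which lies in $S$: the $p$-ordering property forces $\prod_{i=0}^{r-1}(u_{rj} - u_{ij})$ to have $p$-adic valuation exactly $e_{rj}$, and the same inequality $e_{rj} < e_{kj}+1$ ensures the congruence preserves this exact valuation in $F_r(u_{rj})$.

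The one real subtlety is the bookkeeping around moduli: the $d_k$-ordering ties $a_i$ and $u_{ij}$ together at the single level $p^{e_{kj}+1}$, but one would like to extract valuation information at the $r$ distinct levels $e_{rj}$ as $r$ ranges from $0$ to $k$. The saving observation is that $r!_S \mid k!_S$, hence $e_{rj} \leq e_{kj}$, so the single uniform congruence is strong enough to pin down every individual $p$-adic valuation that appears along the way.
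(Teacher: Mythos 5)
Your proof is correct and follows essentially the same route as the paper: transfer $p$-adic valuations through the defining congruence $a_i \equiv u_{ij} \pmod{p^{e_{kj}+1}}$ to a genuine $p$-ordering of $S$, using $e_{rj} \le e_{kj}$ so that the single modulus controls every level $r \le k$, and then invoke Bhargava's identification of the $p$-sequence with $w_p(r!_S)$. Your two-sided argument (lower bound for all $a \in S$, exact valuation at $u_{rj} \in S$) in fact supplies a careful justification of the paper's opening assertion $w_p(d(S,F_r)) = w_p\bigl((a_r-a_0)\cdots(a_r-a_{r-1})\bigr)$, which the paper states without proof and which is slightly delicate since $a_r$ need not belong to $S$.
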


 \begin{proof}  Observe that 
$ w_p(d(S,F_r))=  w_p( (a_{r}-a_0)(a_{r}-a_1) \ldots (a_{r}-a_{r-1})).$  Assume $b_0,b_1, \ldots$ be the   $p$ ordering  such that

 \begin{equation*} 
   a_i \equiv b_{i} \mod p^{e_{k}+1}\ \forall\ 1 \leq i \leq k,
   \end{equation*}

where $p^{e_{k} }=w_{p}(k!_S)$. Clearly   $ w_p( (a_{r}-a_0)(a_{r}-a_1) \ldots (a_{r}-a_{r-1}))$ is same as $   w_p( (b_{r}-b_0)(b_{r}-b_1) \ldots (b_{r}-b_{r-1}))$ and which is, in fact, $  w_p(r!_S) . $

\end{proof} 

 \medskip
 
  We end this section with the following proposition whose proof follows by our way of construction of $d_k$-orderings.
 
  \begin{pro}\label{fdcr} Let $a_0, a_1, \ldots, a_k$ be a $d_k$-ordering of $S \subseteq  \Z $ for given integers $d$ and $k$. 
   Then, for any  polynomial $ f \in \Z[x]$  of degree $k' \leq k$  
   with $d(S,f) \mid d$, we have
  
    $$d(S,f)  = (   f( a_0), f(  a_1), \ldots, f( a_{k'}    ) ).$$
  \end{pro}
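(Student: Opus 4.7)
My plan is to verify the identity one prime at a time. Since $d(S,f) \mid d$, only primes dividing $d$ can contribute, so fix such a prime $p = p_j$. Let $u_0, u_1, u_2, \ldots$ denote the $p$-ordering of $S$ from which the $d_k$-ordering $a_0, \ldots, a_k$ was built, and set $e := v_p(k!_S)$. The defining congruence $a_i \equiv u_i \pmod{p^{e+1}}$, combined with $f \in \mathbb{Z}[x]$, immediately gives
\[
f(a_i) \;\equiv\; f(u_i) \pmod{p^{e+1}} \qquad (0 \le i \le k'),
\]
so the $p$-adic valuations of $f(a_i)$ and $f(u_i)$ can only differ when both are at least $e+1$.

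The key classical ingredient I would invoke is a consequence of Bhargava's theory of $p$-orderings: for any $f \in \mathbb{Z}[x]$ of degree $k'$ one has
\[
v_p\bigl(d(S,f)\bigr) \;=\; \min_{0 \le i \le k'} v_p\bigl(f(u_i)\bigr).
\]
One inclusion is immediate since each $u_i \in S$; the other uses that the first $k'{+}1$ terms of a $p$-ordering form a regular $p$-local basis and hence detect $p$-integrality for polynomials of degree at most $k'$. Given this, the problem reduces to establishing that $\min_i v_p(f(a_i)) = \min_i v_p(f(u_i))$.

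The comparison rests on the observation that if $v_p(f(u_i)) \le e$, then the congruence above forces $v_p(f(a_i)) = v_p(f(u_i))$, because the error has valuation at least $e+1 > v_p(f(u_i))$. It therefore suffices to know that the minimum $\min_i v_p(f(u_i))$ is attained at some index $i$ with $v_p(f(u_i)) \le e$. After the harmless rescaling that reduces $f$ to a polynomial whose content is a $p$-unit, the bound $v_p(d(S,f)) \le v_p(k'!_S) \le v_p(k!_S) = e$ becomes available (since $f/d(S,f) \in \mathrm{Int}_{k'}(S,\mathbb{Z})$ forces $d(S,f) \mid k'!_S$ after removing the content), which places the minimum squarely in the regime where the congruence pins down the valuation exactly.

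The main obstacle I anticipate is making the reduction to $p$-primitive $f$ rigorous and quoting or supplying the Bhargava-style characterization of $v_p(d(S,f))$ via the first $k'{+}1$ terms of a $p$-ordering; this is precisely the content that Lemma \ref{conlem} suggests for the special polynomials $F_r$, and one needs to extend it to arbitrary $f$. Once these pieces are in place, assembling the valuations at every prime $p \mid d$ yields $d(S,f) = (f(a_0), \ldots, f(a_{k'}))$ as claimed.
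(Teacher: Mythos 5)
Your local analysis at a prime $p \mid d$ is sound, and it is essentially the argument the paper leaves unsaid: the paper's entire justification is the remark that the proposition ``follows by our way of construction of $d_k$-orderings,'' so your write-up supplies the missing content. The chain you use --- $a_i \equiv u_i \pmod{p^{e+1}}$ forces $f(a_i)\equiv f(u_i)\pmod{p^{e+1}}$; the fixed divisor of a degree-$k'$ polynomial is detected $p$-locally by the first $k'+1$ terms of a $p$-ordering (this is exactly Lemma \ref{lem:first} of the paper, so you need not import it from Bhargava); and, after dividing out the $p$-part of the content (which shifts both sides of the local identity by the same amount, so it really is harmless), $w_p(d(S,f))$ divides $w_p(k'!_S)$, hence divides $w_p(k!_S)$ --- correctly gives $w_p\big(\gcd(f(a_0),\ldots,f(a_{k'}))\big)=w_p(d(S,f))$ for every prime $p$ dividing $d$.

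The genuine gap is the opening sentence ``only primes dividing $d$ can contribute.'' That is true of $d(S,f)$, because $d(S,f)\mid d$, but not of the right-hand side: a $d_k$-ordering is constrained only modulo powers of the primes dividing $d$ and need not lie in $S$, so $\gcd(f(a_0),\ldots,f(a_{k'}))$ can acquire prime factors outside $d$. Concretely, take $S=\Z$, $d=2$, $k=k'=1$, $f=x$: the sequence $a_0=6$, $a_1=3$ is a legitimate $2_1$-ordering (it is congruent to the $2$-ordering $0,1$ modulo $2$), yet $(f(a_0),f(a_1))=(3)$ while $d(S,f)=(1)$. So the ideal equality as displayed cannot be proved; what your argument actually establishes (and what the construction of $d_k$-orderings can at best guarantee) is the prime-by-prime statement $w_p(d(S,f))=w_p\big((f(a_0),\ldots,f(a_{k'}))\big)$ for all $p\mid d$. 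To make the proof complete you must either restrict the conclusion to those $p$-parts, or impose an extra hypothesis on the $d_k$-ordering (e.g.\ that its terms are initial terms of a $q$-ordering of $S$ for every prime $q$); the step dismissing the primes $q\nmid d$ has no justification as written, and in fact none is possible for the statement in its literal form.
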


     \section{Irreducibility of polynomials in $ \mathrm{Int}(S, \Z)$}\label{sec irr}
 Before coming to our main results, we fix some notations and assumptions for the whole section. Throughout this section, $S$ denotes an arbitrary subset of $\Z$  and $a_0, a_1, \ldots, a_k$ denotes a $d_k$-ordering of $S$, where $d$ and $k$ automatically come from the context. For a given positive integer $i \leq k+1,\ F_i$ denotes $(x-a_0)(x-a_1) \ldots (x-a_{i-1}).$  Whenever a polynomial $f \in \Q[x] $ is expressed as $\tfrac{ g  }{d}, $ we assume that both $g \in \Z[x]$ and $d \in \Z$ are unique.
 
 \medskip
 
Throughout the section, for a given polynomial $f$, whenever  we   say $f$ is irreducible, then 
 the subset automatically come from the context. Also, for brevity, by an  integer-valued polynomial,  we mean  a  polynomial in Int$(S,\Z)$  if the subset $S$ is clear from the context. 

\medskip

For a polynomial  $f  \in \mathrm{Int}(S, \Z)$, if $ p \mid f(a)\ \forall\ a \in S$, then $f$ has the following factorization  
$$f=p \tfrac{f}{p}$$ in the ring  $\mathrm{Int}(S, \Z).$  Hence, the given polynomial is trivially reducible.  
To get a criterion for the irreducibility of a polynomial $f  \in \mathrm{Int}(S, \Z)$   assumes that no prime $p$ divides  $f(a)\ \forall\ a \in S.$ Such a polynomial is called as {\em image primitive} polynomial. {\em By a polynomial in $ \mathrm{Int}(S, \Z)$, we mean an image primitive polynomial in $ \mathrm{Int}(S, \Z) $ unless specified otherwise..} We start this section with the following lemma.

\begin{lemma}\label{lem:first}  Let
 $f  \in \Z[x] $ be a  polynomial of degree $k$  and $p \in \Z$ be  a  prime number. Assume  $(b_i)_{i \geq 0}$ is a $p$-ordering of $S$ and $e$ is a positive integer. Then 
 $$ p^e \mid f(b_i)\  \forall\ 0 \leq i \leq k  \Leftrightarrow  p^e \mid f(b)\  \forall\  b \in S  .   $$

  \end{lemma}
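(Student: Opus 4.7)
The $(\Leftarrow)$ direction is immediate because each $b_i$ lies in $S$. For the nontrivial $(\Rightarrow)$ direction, my plan is to expand $f$ in the Newton-style basis associated with the $p$-ordering and then peel off the coefficients one at a time.

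Concretely, since the polynomials $F_i(x)=(x-b_0)(x-b_1)\cdots(x-b_{i-1})$ are monic of degree $i$ with integer coefficients, they form a $\Z$-basis of $\Z[x]_{\le k}$. Writing
\[
f(x)=\sum_{i=0}^{k} c_i F_i(x), \qquad c_i\in\Z,
\]
the key observation is the defining property of a $p$-ordering: for every $i$ and every $b\in S$,
\[
v_p\bigl(F_i(b)\bigr)\;\ge\;v_p\bigl(F_i(b_i)\bigr)\;=\;v_p(i!_S),
\]
since $b_i$ was selected so as to minimize the $p$-adic valuation of $\prod_{j<i}(b_i-b_j)$. The aim will then be to prove, by induction on $i$, the stronger statement
\[
p^e \,\bigm|\, c_i F_i(b) \qquad \text{for every } b\in S \text{ and every } 0\le i\le k,
\]
which gives $p^e\mid f(b)$ for all $b\in S$ upon summing.

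For the base case $i=0$, note $F_0\equiv 1$ and $f(b_0)=c_0$, so the hypothesis gives $p^e\mid c_0$. For the inductive step, suppose the claim holds for all indices below $i$. Evaluating at $b_i$, and using that $F_j(b_i)=0$ for $j>i$, we get
\[
f(b_i)=\sum_{j=0}^{i} c_j F_j(b_i).
\]
By the inductive hypothesis each term with $j<i$ is divisible by $p^e$, and the hypothesis $p^e\mid f(b_i)$ then forces $p^e\mid c_i F_i(b_i)$. Translating into valuations, $v_p(c_i)\ge e - v_p(i!_S)$. Now for an arbitrary $b\in S$, the $p$-ordering inequality above gives $v_p\bigl(F_i(b)\bigr)\ge v_p(i!_S)$, hence
\[
v_p\bigl(c_i F_i(b)\bigr) \;\ge\; \bigl(e - v_p(i!_S)\bigr) + v_p(i!_S) \;=\; e,
\]
closing the induction.

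I do not expect a genuine obstacle here; the only subtle point, which must be stated cleanly, is the fact that the $p$-ordering property controls $v_p(F_i(b))$ uniformly in $b\in S$. Everything else is a routine Bhargava-style transfer from the discrete data $\{f(b_i)\}$ to all of $S$ via the basis $\{F_i\}$.
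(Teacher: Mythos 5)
Your argument is correct and follows essentially the same route as the paper: expand $f$ in the basis $F_i(x)=(x-b_0)\cdots(x-b_{i-1})$, peel off the coefficients by induction using $p^e\mid f(b_i)$ together with the $p$-ordering minimality $v_p(F_i(b))\ge v_p(F_i(b_i))=v_p(i!_S)$ for all $b\in S$, and then sum. The only cosmetic difference is that you carry the statement ``$p^e\mid c_iF_i(b)$ for all $b\in S$'' through the induction, whereas the paper first establishes $p^e\mid c_iF_i(b_i)$ for all $i$ and then transfers to general $b$ at the end.
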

 
   \begin{proof} We express the polynomial $f$ as
   
   \begin{equation}\label{eq:repre}
f=     \Sigma_{i=0}^k       c_i   (x-b_0)(x-b_1) \ldots (x-b_{i-1}).
   \end{equation}
   
   We claim that if $ p^e \mid f(b_i)\  \forall\ 0 \leq i \leq k ,$ then $ p^e \mid  c_i(b_i-b_0)(b_i-b_1) \ldots (b_i-b_{i-1})\  \forall\ 0 \leq i \leq k .$ We proceed   by induction. By Eq.(\ref{eq:repre}), $ p^e \mid f(b_0)$ gives us $ p^e \mid c_0$. Substituting $x=b_1$ in Eq.(\ref{eq:repre}), we get  $ p^e \mid c_0 +  c_1 (b_1-b_0) $.  As we know $ p^e \mid c_0$, it follows that $ p^e \mid   c_1 (b_1-b_0) $. Assume that the result is true till all the indices  $i-1 < k,$ i.e. $  p^e \mid c_j(b_j-b_0)(b_j-b_1) \ldots (b_j-b_{j-1})\ \forall\ 0 \leq j \leq  i-1.$ We substitute $x=b_i$ in Eq.(\ref{eq:repre}) and get the following
   
   \begin{equation}\label{eq:repre2}
f(b_i)=     \Sigma_{j=0}^i      c_j  (b_i-b_0)(b_i-b_1) \ldots (b_i-b_{j-1}).
   \end{equation}
  
 By induction hypothesis, $  p^e \mid c_j(b_j-b_0)(b_j-b_1) \ldots (b_j-b_{j-1})\ \forall\ 0 \leq j \leq  i-1$ and by the definition of  $p$-ordering $w_{p}((b_j-b_0)(b_j-b_1) \ldots (b_j-b_{j-1})) \mid w_{p}((b_i-b_0)(b_i-b_1) \ldots (b_i-b_{j-1}))\ \forall\ 0 \leq j \leq  i-1. $ Hence,   $ p^e$ must divide  $c_j(b_i-b_0)(b_i-b_1) \ldots (b_i-b_{j-1})\ \forall\ 0 \leq j \leq  i-1. $ By  Eq. (\ref{eq:repre2}) it follows that $ p^e$  divides $ c_{i}(b_{i}-b_0)(b_{i}-b_1) \ldots (b_{i}-b_{i-1})$. Since $i$ is arbitrary, our claim follows.
   
 To prove the statement of the lemma, we  substitute  $x=b$ in Eq.  (\ref{eq:repre2}) and obtain
 
$$ f(b)=     \Sigma_{j=0}^k      c_j  (b-b_0)(b-b_1) \ldots (b-b_{j-1}).$$

 Since $p^e$ divides $   f(b_i)\  \forall\ 0 \leq i \leq k ,$
 it must divide $c_i(b_i-b_0)(b_i-b_1) \ldots (b_i-b_{i-1})\  \forall\ 0 \leq i \leq k .$ As we know, $w_{p}((b_j-b_0)(b_j-b_1) \ldots (b_j-b_{j-1})) \mid w_{p}((b-b_0)(b-b_1) \ldots (b-b_{j-1}))\ \forall\ 0 \leq j \leq  k. $  Hence  $ p^e$   divides each term in the above expansion of $f(b)$ and consequently divides   $f(b)$.

 The converse part is trivial.
 
   

   \end{proof}

\medskip

Before proceeding further, we recall a fact which is important in the proof of Lemma \ref{ivpcrdseq}. For a given polynomial $f  \in \Z[x] $ and  a prime $p \in \Z$, let $a$ and $b$ be two integers such that $a \equiv b \pmod { p^e}, $
where $e$ is a positive integer. Then we must have  $f(a) \equiv f(b) \pmod { p^e}. $  Now we prove an important  lemma which is the cornerstone of this section.

 \begin{lemma}\label{ivpcrdseq}  For every polynomial $f=\tfrac{ g  }{d} \in \Q[x] $ of degree $k$, the following holds
     $$f \in \mathrm{Int}(S,\Z )   \Leftrightarrow f(a_i) \in \Z\ \forall\ 0 \leq i \leq k,$$
 where $a_0, a_1, \ldots, a_k$ is a $d_k$-ordering of $S \subseteq  \Z. $ \end{lemma}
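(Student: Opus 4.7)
The plan is to reduce the statement to a finite collection of $p$-adic divisibility conditions, one for each prime $p$ dividing $d$, and to shuttle each condition back and forth between the $d_k$-ordering $(a_i)$ and the underlying $p$-ordering $(u_{ip})$ using the defining congruences $a_i \equiv u_{ip} \pmod{p^{e_{kp}+1}}$, where $p^{e_{kp}} = w_p(k!_S)$. Writing $f = g/d$ in lowest terms (so that $\gcd(\mathrm{cont}(g), d) = 1$), the condition $f(a_i) \in \Z$ translates, at each $p \mid d$, into $w_p(d) \mid g(a_i)$, while the condition $f \in \mathrm{Int}(S, \Z)$ translates into $w_p(d) \mid g(b)$ for every $b \in S$, which by Lemma~\ref{lem:first} applied to the $p$-ordering $(u_{ip})$ is equivalent to $w_p(d) \mid g(u_{ip})$ for $0 \le i \le k$. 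So everything comes down to comparing $g(a_i)$ with $g(u_{ip})$ modulo $p^{e_{kp}+1}$.

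The numerical linchpin is the inequality $v_p(d) \le e_{kp}$: when it holds, the congruence $g(a_i) \equiv g(u_{ip}) \pmod{p^{e_{kp}+1}}$ is strong enough that $w_p(d) \mid g(a_i)$ if and only if $w_p(d) \mid g(u_{ip})$. In the forward direction this inequality is automatic, since (\ref{eq:factorial}) implies that any $f \in \mathrm{Int}_k(S, \Z)$ satisfies $k!_S \cdot f \in \Z[x]$, and because $f$ is in lowest terms this forces $d \mid k!_S$. Transferring the relation $w_p(d) \mid g(u_{ip})$ (which holds because $u_{ip} \in S$) across the congruence then yields $f(a_i) \in \Z$ at every index and every $p \mid d$.

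For the converse I must first establish the same bound $v_p(d) \le e_{kp}$ from the weaker hypothesis that $f(a_i) \in \Z$ for $0 \le i \le k$. Suppose, for contradiction, that $v_p(d) \ge e_{kp}+1$ for some $p \mid d$. Then $p^{e_{kp}+1} \mid g(a_i)$ for every $i$, hence by the defining congruence $p^{e_{kp}+1} \mid g(u_{ip})$ for every $i$, and Lemma~\ref{lem:first} promotes this to $p^{e_{kp}+1} \mid g(b)$ for all $b \in S$, giving $v_p(d(S,g)) \ge e_{kp}+1$. Factor $g = \mathrm{cont}(g)\, g'$ with $g'$ primitive of degree $k$; then $g'/d(S,g') \in \mathrm{Int}_k(S,\Z)$ in lowest terms, so (\ref{eq:factorial}) yields $d(S,g') \mid k!_S$ and in particular $v_p(d(S,g')) \le e_{kp}$. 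Combined with $d(S,g) = \mathrm{cont}(g)\, d(S,g')$ this forces $p \mid \mathrm{cont}(g)$, contradicting $\gcd(\mathrm{cont}(g), d) = 1$. With $v_p(d) \le e_{kp}$ in hand, running the earlier transfer argument in reverse delivers $w_p(d) \mid g(u_{ip})$ for all $i$, and a final appeal to Lemma~\ref{lem:first} gives $f \in \mathrm{Int}(S, \Z)$.

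The main obstacle is precisely the preliminary bound $v_p(d) \le e_{kp}$ in the backward direction: without it, the $d_k$-ordering congruence only preserves $e_{kp}+1$ digits of $p$-adic information, whereas the divisibility one wants to transfer demands $v_p(d)$ digits, and the equivalence between divisibility at $a_i$ and at $u_{ip}$ collapses. Everything else is routine bookkeeping with Lemma~\ref{lem:first} and the definition of a $d_k$-ordering.
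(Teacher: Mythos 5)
Your proof is correct and follows the same strategy as the paper's: work prime by prime, transfer divisibility between the $d_k$-ordering $(a_i)$ and the underlying $p$-orderings via the defining congruences modulo $p^{e_k+1}$, and use Lemma \ref{lem:first} to pass from the finitely many terms of a $p$-ordering to all of $S$. The one place you go beyond the paper is the bound $v_p(d)\le e_k$ in the direction ``$f(a_i)\in\Z$ for all $i$ $\Rightarrow f\in\mathrm{Int}(S,\Z)$'': the paper simply asserts this by citing Eq.~(\ref{eq:factorial}) (Bhargava), which as stated applies to polynomials already known to be integer-valued, so it literally justifies the bound only in the opposite direction; your contradiction argument via the primitive part $g'$ of $g$, the congruence transfer, Lemma \ref{lem:first}, and $d(S,g')\mid k!_S$ supplies exactly the justification that step needs under the lowest-terms convention. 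So the proposal is not only correct but somewhat more careful than the paper at that point, while the rest of the bookkeeping coincides with the published argument.
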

 
   \begin{proof}

  Let $f(a_i)= \tfrac{g(a_i)}{d}\in \Z\ \forall\ 0 \leq i \leq k.$ Assume $p $  is  a prime dividing $ d$ such that $w_p(d)=p^e,$ then  $ p^e \mid g(a_i)\   \forall\ 0 \leq i \leq k.$ By our way of construction of $d_k$ ordering, there exists a $p$-ordering $(b_i)_{i \geq 0}$ such that $a_i \equiv b_i \pmod { p^{e_k}}, $ where $p^{e_k}=w_p(k!_S).$ Consequently,   $g(a_i) \equiv g(b_i) \pmod { p^{e_k}}. $   
   By Eq. (\ref{eq:factorial}) (or by Theorem 9 in Bhargava \cite{Bhar3} ), we must have $e \leq e_k. $ Since  it is given that $ p^e \mid g(a_i)\   \forall\ 0 \leq i \leq k,$  hence  $ p^e \mid g(b_i)\   \forall\ 0 \leq i \leq k.$  Invoking Lemma \ref{lem:first},  $ p^e$ divides $ g(b)\ \forall\ b \in S.$ Since this holds for each prime dividing $d$, it follows that $d$ divides $ g(b)\ \forall\ b \in S.$
  In other words,   $f = \tfrac{g }{d} \in  \mathrm{Int}(S,\Z  ) .$ 
  
  Conversely, let $f= \tfrac{g }{d} \in \mathrm{Int}(S,\Z  ).$ Assume $w_p(d)=p^e,$ then  $p^e$ must divide $g(b)\ \forall\ b \in S$. By the definition of $d_k$-ordering, for every $a_i$ there exists an element $b_i \in S$ such that $a_i \equiv b_i \pmod { p^{e_k}}, $  where $ 0 \leq i \leq k.$ As a result  $g(a_i) \equiv g(b_i) \pmod { p^{e_k}} $  and  $p^e$ must divide  $g(a_i)\ \forall\  0 \leq i \leq k.$ Since this holds for any prime $p$ dividing $d$; hence $d$ divides  $g(a_i)\ \forall\  0 \leq i \leq k.$ As a result,  $f(a_i)= \tfrac{g(a_i)}{d}\in \Z\ \forall\ 0 \leq i \leq k,$  which completes the proof.

   \end{proof}

 In other words, for a polynomial $f=\tfrac{ g(x)}{d} \in \Q[x] $ of degree $k$, a $d_k$-ordering is a test set to check whether $f$ is integer-valued or not.  The following lemma gives a criterion for image primitiveness of a given polynomial whose proof follows by Lemma \ref{ivpcrdseq}.

  \begin{lemma}\label{imprcrd}  A polynomial $f \in \mathrm{Int}(S,\Z )  $ of degree $k$ is image primitive iff $\forall\ p \mid k!_S,\ \exists\  0 \leq i \leq k  $ such that $  p \nmid f(a_i).$
  \end{lemma}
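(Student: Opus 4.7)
The plan is to translate image primitivity into the question of whether $f/p \in \mathrm{Int}(S,\Z)$ for some prime $p$, and then test that condition on the $d_k$-ordering by means of Lemma \ref{ivpcrdseq}. By the very definition of image primitive, $f$ fails to be image primitive precisely when there is a prime $p$ with $p \mid f(a)$ for every $a \in S$, equivalently when $f/p \in \mathrm{Int}(S,\Z)$. So the lemma reduces to comparing, for each candidate prime $p$, the condition ``$f/p \in \mathrm{Int}(S,\Z)$'' with the condition ``$p \mid f(a_i)$ for all $0 \leq i \leq k$''.

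Writing $f = g/d$ in the standing canonical form with $g \in \Z[x]$ and $d \in \Z$, we have $f/p = g/(pd)$. For a prime $p \mid k!_S$ (which, under the convention that the primes of $d$ lie among those of $k!_S$, already divides $d$), the integers $d$ and $pd$ have the same set of prime divisors, so the CRT congruences defining a $d_k$-ordering coincide with those defining a $(pd)_k$-ordering. Consequently the given sequence $a_0, \ldots, a_k$ serves simultaneously as a $d_k$-ordering and as a $(pd)_k$-ordering, and Lemma \ref{ivpcrdseq} applied to $f/p$ with this ordering yields $f/p \in \mathrm{Int}(S,\Z) \Leftrightarrow (f/p)(a_i) \in \Z$ for all $0 \leq i \leq k$. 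Since $f(a_i) \in \Z$ already (Lemma \ref{ivpcrdseq} applied to $f$ itself), this is equivalent to $p \mid f(a_i)$ for every such $i$.

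Taking the contrapositive over the primes dividing $k!_S$ then delivers the stated equivalence. I expect the main subtleties to be (i) the transfer from a $d_k$-ordering to a $(pd)_k$-ordering in the application of Lemma \ref{ivpcrdseq}, which is precisely what forces the criterion to be tested only on primes $p \mid k!_S$, and (ii) the implicit observation that any prime $p$ that could obstruct image primitivity must itself divide $k!_S$ --- a fact one verifies by writing $f = g/d$ with $g$ primitive: the condition $p \mid f(a)$ for all $a \in S$ becomes $pd \mid d(S,g)$, and the bound $d(S,g) \mid k!_S$ for primitive $g$ of degree $k$ then forces $p \mid k!_S$. Modulo these two points, the proof is a direct invocation of Lemma \ref{ivpcrdseq}.
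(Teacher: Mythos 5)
Your overall route---rephrasing failure of image primitivity as ``$f/p\in \mathrm{Int}(S,\Z)$ for some prime $p$'' and then testing that condition on $a_0,\ldots,a_k$ via Lemma \ref{ivpcrdseq}---is exactly the reduction the paper has in mind (its ``proof'' is only the remark that the lemma follows from Lemma \ref{ivpcrdseq}). The problem is the parenthetical on which your key step rests: from the standing convention $w_p(d)\leq w_p(d(S,g))\leq w_p(k!_S)$ you may conclude that every prime of $d$ divides $k!_S$, \emph{not}, as you assert, that every prime $p\mid k!_S$ already divides $d$. Consequently the claim that ``$d$ and $pd$ have the same set of prime divisors, so the congruences defining a $d_k$-ordering coincide with those defining a $(pd)_k$-ordering'' is valid only when $p\mid d$ (there the moduli in (\ref{crt}) are literally the same, since they depend only on $k!_S$ and on the prime support). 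For a prime $p\mid k!_S$ with $p\nmid d$, the congruences (\ref{crt}) impose no condition at $p$ whatsoever, the points $a_i$ need not lie in $S$ and are completely unconstrained modulo $p$, and a $d_k$-ordering is in general \emph{not} a $(pd)_k$-ordering; Lemma \ref{ivpcrdseq} therefore cannot be invoked for $f/p=g/(pd)$ with these test points.

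This is not a removable technicality, because such primes are exactly the ones both directions of the equivalence must handle. For instance, take $S=2\Z$ and $f=x$, so $d=1$, $k=1$, $k!_S=1!_S=2$: the obstructing prime $2$ divides $k!_S$ but not $d$, a $1_1$-ordering may consist of arbitrary integers such as $a_0=1$, $a_1=3$, and then $2\nmid f(a_0)$ even though $f$ is not image primitive. So for primes $p\mid k!_S$ with $p\nmid d$ your argument cannot be completed from the data you use; some additional input is needed (e.g.\ requiring the $a_i$ to lie in $S$, or replacing the $d_k$-ordering by one adapted to $d\,k!_S$), and the paper's one-line proof glosses over the same point. Your observation (ii), that any prime obstructing image primitivity must divide $k!_S$ because $pd\mid d(S,g)\mid k!_S$, is correct; the gap lies entirely in your transfer step (i), where the inclusion between the prime supports of $d$ and of $k!_S$ is used in the wrong direction.
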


 The following lemma gives a criterion for a polynomial to be integer-valued.
 
  \begin{lemma}\label{ivpcr}  A polynomial $f=\tfrac{ \sum_{i=0}^{k} b_iF_i(x)}{d} \in \Q[x] $ is integer-valued   over $S$ iff $ \forall\ p \mid d, w_p(d) \mid w_p(b_ii!_S)\ \forall\  0 \leq i \leq k. $
  \end{lemma}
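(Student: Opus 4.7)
The plan is to use Lemma \ref{ivpcrdseq} to reduce the statement to a finite system of divisibilities, and then to disentangle those divisibilities into one for each coefficient $b_i$ by exploiting the ``triangular'' behaviour of the basis $\{F_i\}$ along a $d_k$-ordering.

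By Lemma \ref{ivpcrdseq}, $f\in\mathrm{Int}(S,\Z)$ is equivalent to $d\mid\sum_{i=0}^{j}b_iF_i(a_j)$ for every $0\le j\le k$ (using $F_i(a_j)=0$ for $i>j$). Fix a prime $p\mid d$ and write $p^e=w_p(d)$. The proof hinges on two valuation facts, both consequences of the construction of $d_k$-orderings together with Lemma \ref{conlem}: for $i\le j$ one has $w_p(i!_S)\mid w_p(F_i(a_j))$, and equality $w_p(F_i(a_i))=w_p(i!_S)$ holds when $i=j$. Indeed, since $a_l\equiv b_l\pmod{p^{e_k+1}}$ for a genuine $p$-ordering $(b_l)$ of $S$, the products $\prod_{l<i}(a_j-a_l)$ and $\prod_{l<i}(b_j-b_l)$ have identical $p$-adic valuations; and along a $p$-ordering, the value $\prod_{l<i}(b_i-b_l)=i!_S$ is the minimum of $\prod_{l<i}(a-b_l)$ over $a\in S$, hence any $b_j$ with $j\ge i$ yields a product of valuation at least $w_p(i!_S)$.

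For the forward implication I would induct on $j$. At $j=0$, $p^e\mid b_0F_0(a_0)=b_0$, giving $w_p(d)\mid w_p(b_0\cdot 0!_S)$. Assuming the claim for all indices less than $j$, the two facts above yield $w_p(b_iF_i(a_j))\ge w_p(b_i\cdot i!_S)\ge w_p(d)$ for every $i<j$, so $p^e$ divides the partial sum $\sum_{i<j}b_iF_i(a_j)$; subtracting this from $\sum_{i\le j}b_iF_i(a_j)$ (which $p^e$ divides by hypothesis) forces $p^e\mid b_jF_j(a_j)$, and invoking the equality case $w_p(F_j(a_j))=w_p(j!_S)$ rearranges to $w_p(d)\mid w_p(b_j\cdot j!_S)$. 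The converse is immediate: under the hypothesis, every summand $b_iF_i(a_j)$ with $i\le j$ has $p$-adic valuation at least $w_p(d)$, hence so does the whole sum, and Lemma \ref{ivpcrdseq} delivers integer-valuedness once this is verified for every prime $p\mid d$.

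The only genuine obstacle is justifying the basic divisibility $w_p(i!_S)\mid w_p(F_i(a_j))$ when the $a_i$ lie outside $S$; this is precisely the point addressed in the remark following the definition of $d_k$-orderings, where the congruences modulo $p^{e_k+1}$ are engineered so that $p$-adic valuations of products over a $d_k$-ordering coincide with those over an honest $p$-ordering, so nothing new is required beyond invoking that construction.
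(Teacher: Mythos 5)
Your proposal is correct and follows essentially the same route as the paper's proof: reduce to the finite test set $a_0,\ldots,a_k$ via Lemma \ref{ivpcrdseq}, then induct on the index, isolating the top term $b_jF_j(a_j)$ using the triangularity of the basis and the valuation facts $w_p(i!_S)\mid w_p(F_i(a_j))$ with equality at $i=j$ (Lemma \ref{conlem} and the $d_k$-ordering construction), with the converse handled term by term. Your explicit statement of the two valuation facts and the subtraction step is just a cleaner articulation of the same argument.
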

 
 \begin{proof} By Lemma \ref{ivpcrdseq}, a given polynomial $f=\tfrac{ \sum_{i=0}^{k} b_iF_i(x)}{d} \in \Q[x] $ is integer-valued over $S$   iff
 
 $$f(a_i) =\tfrac{ g(a_i)}{d} \in \Z\ \forall\ 0 \leq i \leq k.$$

Now we proceed by induction. We assume that the polynomial is integer-valued over $S$. Substituting $x=a_0,$ we get $ d$ divides $b_00!_S,$ where $0!_S=1$. Assume that the result holds for all indices up to $i-1<k$. We put $x=a_i$ and obtain

$$f(a_i)=\tfrac{ \sum_{j=0}^{i} b_jF_j(a_i)}{d} \in \Z. $$

By induction hypothesis, $w_p(d)$ divides $b_jj!_S\ \forall\ 0 \leq j \leq i-1$ for all $p \mid d$, hence divides $
w_{p}(b_j(a_i-a_0)(a_i-a_1) \ldots (a_i-a_{j-1}))\ \forall\ 0 \leq j \leq  i-1. $ Since $w_p(d)$ divides $g(a_i),$ it must divide  $w_p( b_i(a_i-a_0)(a_i-a_1) \ldots (a_i-a_{i-1})) $ for all $p \mid d$.
By our way of construction of $d_k$-ordering (or by Lemma  \ref{conlem}), $w_p( (a_i-a_0)(a_i-a_1) \ldots (a_i-a_{i-1}))  $ is same as $w_p(i!_S)$ for all $p \mid d$, hence $w_p(d) \mid b_jj!_S\ \forall\ 0 \leq j \leq k    $ for all $p \mid d.$

Conversely, assume $w_p(d) \mid b_jj!_S\ \forall\ 0 \leq j \leq k    $ for all $p \mid d.$ For any $0 \leq i \leq k$, $f(a_i)$ can be written as

\begin{equation}\label{eq:above}
 f(a_i)=\tfrac{ \sum_{j=0}^{i} b_jF_j(a_i)}{d}.  
\end{equation}

Since for every  $p \mid d,\ w_p(j!_S) \mid w_p( (a_i-a_0)(a_i-a_1) \ldots (a_i-a_{j-1}), $ it follows that $w_p(d)$ divides each term in the numerator of Eq. (\ref{eq:above}) by Lemma \ref{conlem}. 
 Consequently, $f(a_i) \in \Z\ \forall\ 0 \leq i \leq k  $ 
and the polynomial is integer-valued over $S$ by Lemma \ref{ivpcrdseq}.


 \end{proof}
 
  The following lemma also gives a criterion for image primitiveness. Before the lemma, recall that (see \cite{Bhar1}, \cite{Bhar2}, \cite{Bhar3} etc), for any polynomial  $f = \tfrac{ g }{d}\in \mathrm{Int}(S,\Z )  $ of degree $k$ and any   prime $p$, we always have $ w_p(d )   \leq w_p(d(S,g)) \leq w_p(k!_S).$  
 
  \begin{lemma}\label{imprcr}  A polynomial $f = \tfrac{ \sum_{i=0}^{k} b_iF_i(x) }{d}\in \mathrm{Int}(S,\Z )  $ is image primitive iff $\forall\ p \mid k!_S,\ \exists\  0 \leq i \leq k  $ such that $  w_p(d) =w_p(b_ii!_S).$
  \end{lemma}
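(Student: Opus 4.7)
The plan is to invoke Lemma \ref{imprcrd} and then analyze the expansion $g=\sum_{j=0}^{k} b_j F_j$ at each point of the $d_k$-ordering. By Lemma \ref{imprcrd}, $f$ is image primitive iff for every prime $p\mid k!_S$ there is some $0\leq i\leq k$ with $p\nmid f(a_i)$. Since $F_j(a_i)=0$ for $j>i$, we have $f(a_i)=g(a_i)/d$ with $g(a_i)=\sum_{j=0}^{i} b_j F_j(a_i)$, so it suffices to decide, for each relevant $p$, when $w_p(g(a_i))=w_p(d)$.

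Three facts control the $p$-powers of the individual summands: Lemma \ref{ivpcr} gives $w_p(d)\mid w_p(b_j j!_S)$ for every $j$ (since $f$ is integer-valued); Lemma \ref{conlem} yields $w_p(F_j(a_j))=w_p(j!_S)$; and the minimality property of a $p$-ordering gives $w_p(j!_S)\mid w_p(F_j(a_i))$ whenever $i\geq j$. Chaining these, $w_p(b_j F_j(a_i))$ is always a multiple of $w_p(b_j j!_S)$, which is itself a multiple of $w_p(d)$, with the crucial equality $w_p(b_j F_j(a_j))=w_p(b_j j!_S)$ on the diagonal.

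For the backward direction, I would let $i^*$ be the smallest index with $w_p(d)=w_p(b_{i^*}i^*!_S)$; by minimality every $j<i^*$ has $w_p(d)$ a proper divisor of $w_p(b_j j!_S)$, so the diagonal summand of $g(a_{i^*})$ is the unique one whose exact $p$-part is $w_p(d)$, forcing $w_p(g(a_{i^*}))=w_p(d)$ and $p\nmid f(a_{i^*})$. For the forward direction, if $w_p(g(a_i))=w_p(d)$ then at least one summand $b_{j_0}F_{j_0}(a_i)$ must have exact $p$-part $w_p(d)$ (otherwise the sum would be more $p$-divisible); combined with the chain $w_p(d)\mid w_p(b_{j_0}j_0!_S)\mid w_p(b_{j_0}F_{j_0}(a_i))$, this pins $w_p(b_{j_0}j_0!_S)=w_p(d)$, yielding the desired index.

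The most delicate step is that Lemma \ref{conlem} is stated for primes dividing $d$, so the case $p\mid k!_S$ with $p\nmid d$ requires separate care: there $w_p(d)=1$ and the target condition becomes $p\nmid b_j j!_S$; the $p$-ordering divisibility $w_p(j!_S)\mid w_p(F_j(a_i))$ still survives directly from the defining minimality of $j!_S$ as long as the $d_k$-ordering is taken inside $S$, and the diagonal equality used above is recovered by swapping $a_{i^*}$ for a suitable element of $S$ attaining this minimum and transferring back via Lemma \ref{imprcrd}. This bookkeeping for primes that divide $k!_S$ but not $d$ is the only genuinely nontrivial obstacle.
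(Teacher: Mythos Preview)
Your argument is correct for primes $p\mid d$, but it takes a genuinely different route from the paper. The paper never works pointwise at the $d_k$-ordering via Lemma~\ref{imprcrd}; instead it argues entirely through Lemma~\ref{ivpcr}. For the forward direction the paper simply notes that integer-valuedness already gives $w_p(d)\le w_p(b_i\,i!_S)$ for every $i$, and if all these inequalities were strict then Lemma~\ref{ivpcr} would force $f/p\in\mathrm{Int}(S,\Z)$, contradicting image primitivity. For the converse, if $f$ is not image primitive then some $f/p$ is integer-valued, and Lemma~\ref{ivpcr} applied to $f/p$ (whose denominator $pd$ is now divisible by $p$) yields $w_p(pd)\le w_p(b_i\,i!_S)$, contradicting the assumed equality. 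This contradiction strategy is shorter and sidesteps your term-by-term valuation bookkeeping. What your approach buys in exchange is constructiveness: by taking the minimal index $i^*$ you actually exhibit a point of the $d_k$-ordering at which $p\nmid f(a_{i^*})$, information the paper's argument does not produce.

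Regarding the case $p\mid k!_S$ but $p\nmid d$ that you single out as the only nontrivial obstacle: you are right to be cautious, and your proposed patch (restrict to $d_k$-orderings inside $S$, swap $a_{i^*}$ for a genuine $p$-ordering element) is not quite enough, because even a $d_k$-ordering lying in $S$ need not be a $p$-ordering for primes not dividing $d$, so neither Lemma~\ref{conlem} nor the minimality inequality $w_p(j!_S)\mid w_p(F_j(a_i))$ is available. The paper handles this case in a single parenthetical (``if $p\nmid d$, then the condition obviously holds''), so your hesitation here is well placed rather than excessive; the statement for such primes genuinely depends on the particular $d_k$-ordering chosen, and in the subsequent application (Theorem~\ref{mainth}) only primes dividing $d$ are ever used.
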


  \begin{proof} Let the polynomial  $f = \tfrac{ \sum_{i=0}^{k} b_iF_i(x) }{d}\in \mathrm{Int}(S,\Z )  $ be image primitive. By Lemma \ref{ivpcr}, for any prime  $p \mid d$ we must have  $w_p(d) \leq w_p(b_ii!_S)\ \forall\ 0\leq i\leq k$. If there exists some prime $p\mid d$ such that  $  w_p(d) < w_p(b_ii!_S)\ \forall\ 0\leq i\leq k,$ then $  w_p(pd) \leq  w_p(b_ii!_S)\ \forall\ 0\leq i\leq k.$ Again by Lemma \ref{ivpcr}, the polynomial 
$\tfrac{ f}{p} = \tfrac{ \sum_{i=0}^{k} b_iF_i(x) }{pd}\in \mathrm{Int}(S,\Z ).  $ In other words $p \mid f(a)\ \forall\ a \in S$ and $f$ is not image primitive. Which is a contradiction. Hence there must exist a $ 0\leq i\leq k $  such that $  w_p(d) = w_p(b_ii!_S)\ \forall\ p \mid k!_S
.$ A fortiori this condition holds for all $p\mid k!_S$, since by the inequality before Lemma 3.5, $p\mid d\Rightarrow p\mid k!_S$ (if $p\nmid d,$ then the condition obviously holds).
  
  Conversely, assume that for every $  p \mid k!_S,\ \exists\  0 \leq p_i \leq k,  $ such that $  w_p(d) =w_p(b_{p_i}p_i!_S)   $ and the polynomial $f$ is not image primitive. Hence, there exists a prime $  p \mid k!_S$ such that $  p \mid f(a)\ \forall\  a  \in S.$ Consequently, the polynomial $   \tfrac{ f}{p}  = \tfrac{ \sum_{i=0}^{k} b_iF_i(x) }{pd}$ is a member of $ \mathrm{Int}(S,\Z ) . $ By Lemma \ref{ivpcr}, we get  $  w_p(pd)  \leq w_p(b_{p_i}p_i!_S)$ or  $  w_p(d) < w_p(b_{p_i}p_i!_S)$ which is a contradiction. Hence, $f$ must be image primitive.
  .  
    \end{proof}

From this lemma, it follows that a given polynomial $f = \tfrac{ g }{d}\in \mathrm{Int}(S,\Z )  $ is image primitive iff $d=d(S,g).$       Observe that if a polynomial  $f   \in \mathrm{Int}(S,\Z ), $ which factors as $f_1f_2$ in $\mathrm{Int}(S,\Z ) $, is image primitive, then both the polynomials  $f_1$ and $f_2$ must be image primitive.  
Now we   prove  our main theorem.

     \begin{theorem}\label{mainth}  Let $f = \tfrac{g}{d} \in \mathrm{Int}(S,\Z ) $ be  a polynomial  of degree $k$ and $a_0, a_1, \ldots, a_k$ be a $d_k$-ordering. Then $f$ is irreducible iff    
for any factorization $g=
( \sum_{i=0}^{k_1} b_iF_i(x) )$ $( \sum_{j=0}^{k_2} c_jF_j(x) )$   there exist a prime $p \mid d$ and  
non-zero positive integers $r  \leq k_1$ and $s  \leq k_2$ 
such that  
$  \tfrac{    \mu_r (d , p)\mu_s (d , p) }    {w_p(d)}    \nmid w_{p}(b_rc_s ).$
  
  \end{theorem}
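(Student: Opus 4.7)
The plan is to recast the theorem contrapositively: $f$ is reducible iff there is a factorization $g = g_1 g_2 = (\sum_{i=0}^{k_1} b_i F_i)(\sum_{j=0}^{k_2} c_j F_j)$ in $\Z[x]$ with $\deg g_i \geq 1$, such that $w_p(d) \mid w_p(b_rr!_S)\,w_p(c_ss!_S)$ for every $p \mid d$ and every $r,s \geq 1$. Its equivalence with the theorem's stated form is a direct algebraic manipulation: from $\mu_i(d,p)\,w_p(i!_S) = w_p(d)$ we get $\mu_r(d,p)\mu_s(d,p)/w_p(d) = w_p(d)/(w_p(r!_S)\,w_p(s!_S))$, so $\mu_r\mu_s/w_p(d) \mid w_p(b_rc_s)$ is exactly $w_p(d) \mid w_p(b_rr!_S)\,w_p(c_ss!_S)$. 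The latter is precisely the ``product'' of the Lemma \ref{ivpcr} condition applied to $g_1/d_1$ and $g_2/d_2$ for an arbitrary integer split $d = d_1 d_2$.

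For the direction \emph{reducible $\Rightarrow$ compliant factorization exists}, I would take a nontrivial $f = f_1 f_2$ in $\mathrm{Int}(S,\Z)$ (both factors automatically image primitive). Gauss's lemma applied to $g = (d_1 f_1)(d_2 f_2)$, with an integer split $d = d_1 d_2$, produces $g_1, g_2 \in \Z[x]$ of positive degree with $g = g_1g_2$ and $f_i = g_i/d_i$; the positive-degree conclusion uses that image primitivity of $f$ rules out any constant non-unit factor. Expanding $g_i$ in the $F$-basis, Lemma \ref{ivpcr} yields $w_p(d_1) \mid w_p(b_r r!_S)$ and $w_p(d_2) \mid w_p(c_s s!_S)$ for every $p, r, s$ (trivially when $p \nmid d_i$); multiplying these and using $w_p(d_1)\,w_p(d_2) = w_p(d)$ produces the compliant divisibility.

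For the converse, given a compliant factorization $g = g_1 g_2$, I would construct the integer split $d = d_1 d_2$ prime by prime, setting $w_p(d_1) := \min(w_p(d),\, \min_{0 \leq r \leq k_1} w_p(b_r r!_S))$ and $w_p(d_2) := w_p(d)/w_p(d_1)$. The compliant hypothesis directly yields $w_p(d_2) \mid w_p(c_s s!_S)$ for $s \geq 1$. The $s = 0$ (and the symmetric $r = 0$) case is handled using the integer-valuedness $f(a_i) \in \Z$, which gives $w_p(d) \mid w_p(g(a_i))$ for every $d_k$-ordering point, combined with an analysis of the dominant term of $g_j(a_i)$ in its $F$-expansion. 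Lemma \ref{ivpcr} then certifies $g_i/d_i \in \mathrm{Int}(S,\Z)$, and since $\deg g_i \geq 1$ neither factor is a unit; hence $f = (g_1/d_1)(g_2/d_2)$ is a nontrivial factorization.

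The principal technical subtlety lies in the boundary indices $r = 0$ and $s = 0$ in the converse direction: the theorem's divisibility is only asserted for $r, s \geq 1$, while Lemma \ref{ivpcr} also demands control on $w_p(b_0)$ and $w_p(c_0)$. The identities $b_0 = g_1(a_0)$, $c_0 = g_2(a_0)$, and $b_0 c_0 = g(a_0)$, together with $w_p(d) \mid w_p(g(a_i))$ for every $d_k$-ordering index, must be bridged through a case analysis on whether $w_p(b_0)$ or $w_p(c_0)$ already realizes the minima $\min_{r \geq 1} w_p(b_r r!_S)$ and $\min_{s \geq 1} w_p(c_s s!_S)$; in the tie cases one selects a $d_k$-ordering index $a_i$ where the minimizing term in the $F$-expansion of $g_1$ or $g_2$ survives without cancellation, and derives the needed inequality from $w_p(d) \mid w_p(g(a_i))$. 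A secondary piece of bookkeeping is the Gauss's lemma step, which must deliver an integer split of the denominator $d$ rather than merely a rational one.
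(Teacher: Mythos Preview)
Your argument is correct and follows essentially the same strategy as the paper: both proofs hinge on splitting $d=d_1d_2$ and translating integer-valuedness and image-primitivity of $g_i/d_i$ into the coefficient conditions of Lemmas~\ref{ivpcr} and~\ref{imprcr}. Your contrapositive organization is equivalent to the paper's direct/contradiction structure, and your choice $w_p(d_1)=\min\bigl(w_p(d),\min_r w_p(b_r r!_S)\bigr)$ coincides with the paper's $d_1=d(S,g_1)$.

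Two points where you are actually more careful than the paper: in the ``reducible $\Rightarrow$ compliant factorization'' direction you invoke Lemma~\ref{ivpcr} to get $w_p(d_1)\mid w_p(b_r r!_S)$ for \emph{all} $r$ (and likewise for $s$), which is what is needed to contradict the existential condition---the paper instead cites Lemma~\ref{imprcr}, which only furnishes a \emph{single} $(r,s)$ with equality and does not literally close the contradiction. Second, you explicitly flag the boundary indices $r=0$, $s=0$, which the theorem statement excludes but which Lemma~\ref{ivpcr} requires; your sketched resolution is sound (if $v_p(b_0)<\min_{r\ge 1} v_p(b_r r!_S)$ then $v_p(g_1(a_i))=v_p(b_0)$ for every $i$, whence $v_p(d)\le v_p(b_0)+v_p(g_2(a_i))$ for all $i$ and Lemma~\ref{ivpcr} yields the missing $r=0$ inequalities; the $s=0$ case is symmetric, and the remaining cases reduce to the hypothesis for $r,s\ge 1$). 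The paper's proof does not address this point. Your Gauss-lemma step also works, once one notes that image-primitivity of each $f_i$ forces $g_i$ to be primitive, so that $g_1g_2$ is primitive and hence $g=g_1g_2$, $d=d_1d_2$ exactly.
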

  
  \begin{proof}
  For the given  polynomial   $f = \tfrac{g}{d} \in \mathrm{Int}(S, \Z  )   $, suppose that
      for   every factorization   $g=
( \sum_{i=0}^{k_1} b_iF_i(x) )( \sum_{j=0}^{k_2} c_jF_j(x) ),$      
 there exist a prime  $p \mid d  $ and a pair $(r,s) \in \W^2$ such that  $  \tfrac{    \mu_r (d , p)\mu_s (d , p) }    {w_p(d)}    \nmid w_{p}(b_rc_s ).$ 
  Let the contrary be true. 
 Then we can write 
 
 $$f= \dfrac{g_1}{d_1}  \dfrac{g_2}{d_2}= \dfrac{ \sum_{i=0}^{k_1} b_iF_i(x) }{d_1}  \dfrac{\sum_{j=0}^{k_2} c_jF_j(x)}{d_2}, $$
 
 such that both of the polynomials, $\tfrac{ \sum_{i=0}^{k_1} b_iF_i(x) }{d_1}$ and $  \tfrac{\sum_{j=0}^{k_2} c_jF_j(x)}{d_2} $  
 are integer-valued.
 
 Since $\tfrac{ \sum_{i=0}^{k_1} b_iF_i(x) }{d_1}$ and $  \tfrac{\sum_{j=0}^{k_2} c_jF_j(x)}{d_2} $  are  
 image primitive, by Lemma \ref{imprcr}, for each prime$p \mid d$, there exists integers   $0 \leq r \leq k_1$ and $0 \leq s \leq k_2 $ depending on $p,$
   such that 
 $w_p(d_1)=  w_p(b_rr!_S)$ and    $w_p(d_2)=  w_p(c_ss!_S).$ 
 Multiplying and rearranging we get $ \tfrac{ w_p(d_1)w_p(d_2)}{    w_p(r!_S) w_p(s!_S)}    = w_{p}(b_rc_s ).$ 
 Using the definition of  $\mu_i (d , p)$ where $i \in \W$, we get $  \tfrac{    \mu_r (d , p)\mu_s (d , p) }    {w_p(d)}    = w_{p}(b_rc_s ) $,   which is a contradiction.

Conversely, let  $f = \tfrac{g}{d} \in \mathrm{Int}(S,\Z  ) $ be irreducible and   $g=g_1g_2= ( \sum_{i=0}^{k_1} b_iF_i(x) )$ $ ( \sum_{j=0}^{k_2} c_jF_j(x) )$ be a factorization. 
 We can  factor $d$ as $d_1 d_2$ in such a way that in the factorization 
 $$f= \dfrac{g_1}{d_1}  \dfrac{g_2}{d_2}, $$
 
 $d_1$ is the biggest integer such that 
 the polynomial $  \tfrac{g_1}{d_1}$ is  integer-valued. The polynomial  $    \tfrac{g_2}{d_2}$ cannot be integer-valued otherwise $f$  would be reducible. We have  the following observations (see Lemma  \ref{imprcr} and  Lemma  \ref{ivpcr})
 
 $  \tfrac{g_2}{d_2} \notin \mathrm{Int}(S,\Z ) \Rightarrow  \exists$ \ a\ prime\ $ p \mid d  $ and an integer $0 \leq s \leq \deg(g_2)$ such that $w_p(d_2) > w_p(c_ss!_S),$

and 
 $  \tfrac{g_1}{d_1} \in \mathrm{Int}(S,\Z ) \Rightarrow  \forall$ prime $q \mid d_1$ there exists a positive integer $0 \leq r \leq \deg(g_1)$ such have $w_q(d_1)= w_q(b_rr!_S).$

 Combining both of these observations together, we conclude that there exist a prime $p \mid d$ and a pair $ (r,s) \in \W^2$ such that    $w_p(d_1) \geq w_p(b_rr!_S)$ and    $w_p(d_2)>  w_p(c_ss!_S).$ Now we proceed as in the previous part to obtain 
 
  $$  \tfrac{    \mu_r (d , p)\mu_s (d , p) }    {w_p(d)}    > w_{p}(b_rc_s ), $$
 
 which completes the proof.
 
 
  \end{proof}

 Now we give some examples to explain Theorem \ref{mainth}. 
 
 \begin{Ex} Let us check the irreducibility of the polynomial  $$f=\tfrac{18x^6-48x^5+47x^4-29x^2+41x+6}{6} $$
 in $\mathrm{Int}(\Z).$
  We have only the following way of factoring $f$
   
 $$f = \tfrac{f_1f_2      }{6},
 $$

 where $f_1= 2+3x+6x(x-1)+2x(x-1)(x-2)$ and $f_2=  3+4x+3x(x-1)+9x(x-1)(x-2) .$ Now we carefully inspect the coefficients of the polynomials. It is obvious that both $b_0=2$ and $c_1=4$ are not multiple of three. It means $ w_3(b_0c_1)=w_3(2 \times4)=3^0.$  However,  $\tfrac{    \mu_0(6 ,3)\mu_1 (6 , 3) }    {w_3(6)} = \tfrac{3^13^1}{3^1}   > w_{p}(b_rc_s ), $ which implies that the polynomial is  irreducible.
 \end{Ex}

 \begin{Ex} Let us check the irreducibility of the polynomial  $$f=\tfrac{2x^6+9x^5-38x^4-21x^3+57x^2-42x+9}{9} $$
 in $\mathrm{Int}(\Z).$
 Here, we have  the following   factorization
   
 $$f = \tfrac{f_1f_2      }{6},
 $$

 where $f_1= 3+x+9x(x-1)+x(x-1)(x-2)$ and $f_2=  3-9x+3x(x-1)+2x(x-1)(x-2) .$  Here $b_1=1$ and $c_3=2$. The value of  $\tfrac{    \mu_1(9 , 3)\mu_3 (9 , 3) }    {w_3(9)} $ is  $\tfrac{3^23^1}{3^2} $ which does not divide  $ w_3(b_1c_3)=w_3(1   \times2)=3^0.$ Hence, the polynomial is irreducible.
 \end{Ex}
 \medskip
 
By the examples above it is obvious that sometimes Theorem \ref{mainth} may become extremely powerful in testing the irreducibility of a polynomial when its factor's representation in the falling factorial (or generalized falling factorial) basis is known. Also,  by  our approach   all the results in this section can be generalized for any arbitrary subset of a Dedekind domain.  For the sake of completeness, we state the analog of  Theorem  \ref{mainth} in the case of an arbitrary subset of a Dedekind domain.

\begin{theorem}\label{mainth1}  Let $f = \tfrac{g}{d} \in \mathrm{Int}(S,R ) $ be  a polynomial  of degree $k$ and $a_0, a_1, \ldots, a_k$ be a $d_k$-ordering of $S$. Then $f$ is irreducible iff    
for any factorization $g=
( \sum_{i=0}^{k_1} b_iF_i(x) )$ $( \sum_{j=0}^{k_2} c_iF_i(x) ),$   there exists a prime ideal $P \mid d$ and  
non-zero positive integers $r  \leq k_1$ and $s  \leq k_2$ 
such that  
$  \tfrac{    \mu_r (d , P)\mu_s (d , P) }    {w_P(d)}    \nmid w_{P}(b_rc_s ).$
  
  \end{theorem}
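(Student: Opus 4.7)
The plan is to mirror the proof of Theorem \ref{mainth} line by line, systematically replacing primes $p \in \Z$ by prime ideals $P$ of the Dedekind domain $R$, ordinary $p$-adic valuations $w_p$ by $P$-adic valuations $w_P$, and invoking that in a Dedekind domain every nonzero ideal factors uniquely into prime ideals and the Chinese Remainder Theorem holds modulo pairwise coprime ideals. The first step would be to verify that each of the supporting lemmas (Lemmas \ref{conlem}, \ref{lem:first}, \ref{ivpcrdseq}, \ref{ivpcr}, \ref{imprcr}) carries over verbatim to an arbitrary subset $S$ of $R$, since Bhargava's theory of $P$-orderings and $P$-sequences and the construction of $d_k$-orderings via CRT is valid in this generality.

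With those analogues in place, I would argue the forward direction by contradiction. Assume that every factorization $g=(\sum_{i=0}^{k_1} b_iF_i)(\sum_{j=0}^{k_2} c_jF_j)$ admits a prime ideal $P\mid d$ and indices $(r,s)$ with $\tfrac{\mu_r(d,P)\mu_s(d,P)}{w_P(d)} \nmid w_P(b_rc_s)$, yet suppose that $f$ decomposes as $\tfrac{g_1}{d_1}\tfrac{g_2}{d_2}$ with both factors in $\mathrm{Int}(S,R)$. Since $f$ is image primitive, both factors are image primitive, so by the Dedekind version of Lemma \ref{imprcr}, for each $P\mid d$ there exist $r\leq k_1$ and $s\leq k_2$ with $w_P(d_1)=w_P(b_r r!_S)$ and $w_P(d_2)=w_P(c_s s!_S)$. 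Multiplying and using the defining relation $\mu_i(d,P)w_P(i!_S)=w_P(d)$ yields $\tfrac{\mu_r(d,P)\mu_s(d,P)}{w_P(d)}=w_P(b_rc_s)$ for some such pair, contradicting the hypothesis.

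For the converse, assuming $f$ irreducible and given any factorization $g=g_1g_2$ with $g_i=\sum b_iF_i,\sum c_jF_j$ respectively, I would split $d=d_1d_2$ where $d_1$ is chosen maximal (in the divisibility order) so that $\tfrac{g_1}{d_1}\in \mathrm{Int}(S,R)$. Irreducibility forces $\tfrac{g_2}{d_2}\notin \mathrm{Int}(S,R)$. The Dedekind analogue of Lemma \ref{ivpcr} applied to $g_2/d_2$ produces a prime ideal $P\mid d$ and an index $s$ with $w_P(d_2)>w_P(c_s s!_S)$, while the same lemma applied to $g_1/d_1$ yields an index $r$ with $w_P(d_1)\geq w_P(b_r r!_S)$. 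Multiplying and rewriting in terms of $\mu_r$ and $\mu_s$ gives the strict inequality $\tfrac{\mu_r(d,P)\mu_s(d,P)}{w_P(d)}>w_P(b_rc_s)$, hence non-divisibility.

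The main obstacle is not a new idea but rather the careful bookkeeping needed to confirm that the Dedekind-domain versions of the underlying lemmas genuinely hold. In particular, Lemma \ref{lem:first} hinges on the $P$-ordering minimality of $w_P\bigl(\prod_{i<k}(b_k-b_i)\bigr)$, and Lemma \ref{ivpcrdseq} relies on CRT producing simultaneous congruences modulo $P^{e_{kP}+1}$ for the finitely many $P\mid d$; both facts are standard in the Dedekind setting once one localizes at each $P$ to reduce to the discrete valuation ring case, so no fundamentally new argument is required.
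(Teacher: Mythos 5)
Your proposal takes essentially the same approach as the paper: the paper gives no separate proof of Theorem \ref{mainth1}, remarking only that all results of the section generalize to an arbitrary subset of a Dedekind domain, and your plan---transporting Lemmas \ref{conlem}--\ref{imprcr} and the proof of Theorem \ref{mainth} with primes $p$ replaced by prime ideals $P$ and $w_p$ by $w_P$, justified by localization and CRT---is exactly that intended argument. Both of your directions (image primitivity of the two factors yielding $w_P(d_1)=w_P(b_r r!_S)$ and $w_P(d_2)=w_P(c_s s!_S)$, resp.\ the maximal choice of $d_1$ forcing $\tfrac{g_2}{d_2}\notin \mathrm{Int}(S,R)$ and hence a strict inequality) reproduce the paper's reasoning for Theorem \ref{mainth} faithfully.
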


\section*{Acknowledgement} We thank the anonymous referee for his valuable suggestions, which really improved the paper. The authtor wishes to thank Professor  Louiza Fouli (Associate Editor, RMJM) for her help.

\end{document}